\tikzstyle{v} = [circle, draw, inner sep=2pt, minimum size=3pt, fill=black]
\newcommand{\ceq}{\coloneqq}
\newcommand{\GAMMA}{\mathit{\Gamma}}
\newcommand{\Gx}{{\mathit{\Gamma}(x)}}
\newcommand{\G}{\GAMMA} %群に使用
\newcommand{\g}{\gamma} %群の元に使用
\newcommand{\X}{L} %L/qLを表すのに使用
\newcommand{\subG}{{\G'}} %subgroup に使用
\DeclareMathOperator{\id}{id}
\DeclareMathOperator{\rank}{rank}
\DeclareMathOperator{\lcm}{lcm}
\DeclareMathOperator{\Hom}{Hom}
\DeclareMathOperator{\GL}{GL}
\DeclareMathOperator{\mmoodd}{mod}
\DeclareMathOperator{\vol}{vol}
\DeclareMathOperator{\stt}{R}
\newcommand{\st}{{\stt}}
\DeclareMathOperator{\tr}{tr}
\DeclareMathOperator{\image}{im}
\DeclareMathOperator{\diag}{diag}
\DeclareMathOperator{\LL}{L}
\DeclareMathOperator{\Ind}{Ind}
\DeclareMathOperator{\Res}{Res}
\theoremstyle{plain} 
\newtheorem{theorem}{Theorem}[section]
\newtheorem{proposition}[theorem]{Proposition}
\newtheorem*{proposition*}{Proposition}
\newtheorem*{claim*}{Claim}
\newtheorem{lemma}[theorem]{Lemma}
\newtheorem{corollary}[theorem]{Corollary}
\theoremstyle{definition} 
\newtheorem*{definition*}{Definition}
\newtheorem{example}[theorem]{Example}
\newtheorem*{remark*}{Remark}
\numberwithin{equation}{section}
\title[The quasi-polynomiality of $\mmoodd{q}$ permutation representations]{The quasi-polynomiality of $\mmoodd{q}$ permutation representations for a linear finite group action on a lattice}
\author[R. Uchiumi and M. Yoshinaga]{Ryo Uchiumi and Masahiko Yoshinaga}
\date{August 30, 2024}
\subjclass[2010]{05E18, 20C10}
\keywords{quasi-polynomials; finite groups; lattices.}
\thanks{R. U. was supported by JST SPRING, Grant Number JPMJSP2138.}
\thanks{M. Y. was partially supported by JSPS KAKENHI, Grant Number JP23H00081.}
\begin{document}

\begin{abstract}
For given linear action of a finite group on a lattice and a positive integer $q$, 
we prove that the $\mmoodd{q}$ permutation representation is a quasi-polynomial 
in $q$. Additionally, we establish several results that can be considered as 
$\mmoodd{q}$-analogues of results by Stapledon for equivariant 
Ehrhart quasi-polynomials. 
We also prove a reciprocity-type result for multiplicities of irreducible decompositions. 
\end{abstract}

\maketitle

\tableofcontents

%%%%%%%%%%%%%%%%%%%%%%%%%%%%%%%%%%%%%%%%%%%%%%%%%%%%%%%

%\fontsize{10pt}{15pt}\selectfont

\section{Introduction}
\subsection{Quasi-polynomials}
Let $R$ be a commutative ring. A function $f : \mathbb{Z}_{(>0)} \to R$ is called a \textit{quasi-polynomial} if there exist a positive integer $\tilde{n} \in \mathbb{Z}_{>0}$ and polynomials $g_1(t),\ldots,g_{\tilde{n}}(t) \in R[t]$ such that 
\begin{align*}
f(q) = g_r(q),\quad \text{if $q \equiv r \mod{\tilde{n}}$}\qquad (1 \leq r \leq \tilde{n}).
\end{align*}
The positive integer $\tilde{n}$ is called a \textit{period} and each polynomial $g_r$ is called the \textit{constituent} of $f$.
The quasi-polynomial $f$ has degree $d$ if all the constituents have degree $d$.
Moreover, the quasi-polynomial $f$ has the \textit{$\gcd$-property} if the polynomial $g_r$ depends on $r$ only through $\gcd\{\tilde{n},\, r\}$. In other words, $g_{r_1} = g_{r_2}$ if $\gcd\{\tilde{n},\, r_1\} = \gcd\{\tilde{n},\, r_2\}$.

Quasi-polynomials play important roles in many areas of mathematics. 
They appear frequently as counting functions 
(in this case, $R = \mathbb{Z}$).
In particular, the following two notions have been actively studied. 

\begin{example}[The Ehrhart quasi-polynomial]
Let $\mathcal{P}$ be a rational polytope in $\mathbb{R}^n$.  
For $q\in\mathbb{Z}_{\geq0}$, define 
\[
\LL_\mathcal{P}(q) \ceq \#(q\mathcal{P} \cap \mathbb{Z}^n). 
\]
Then $\LL_{\mathcal{P}}(q)$ is a quasi-polynomial ({\cite[Theorem 3.23]{BR:2007}}), 
known as the Ehrhart quasi-polynomial. 
\end{example}

\begin{example}[The characteristic quasi-polynomial]
Let $L \simeq \mathbb{Z}^\ell$ be a lattice and $L^\vee \ceq \Hom_{\mathbb{Z}}
(L,\mathbb{Z})$ be the dual lattice.
Given $\alpha_1,\ldots,\alpha_n \in L^\vee$, 
we can associate a hyperplane arrangement $\mathcal{A} \ceq \set{H_1,\ldots,H_n}$ in $\mathbb{R}^\ell \simeq L \otimes \mathbb{R}$, where 
\begin{align*}
H_i \ceq \set{ x \in L\otimes \mathbb{R} \mid \alpha_i(x) = 0}.
\end{align*}
For a positive integer $q \in \mathbb{Z}_{>0}$, define the $\mmoodd{q}$ complement 
of the arrangement by 
\begin{align*}
\begin{split}
M(\mathcal{A};\,q) &\ceq (L/qL)^\ell\setminus\bigcup_{i=1}^n \bar{H}_i\\
&=\set{ \bar{x} \in L/qL \mid \alpha_i(x) \not\equiv 0 \mod{q}\quad  \text{for all $i \in \{1,\ldots,n\}$}}.
\end{split}
\end{align*}
It is known 
({\cite[Theorem 2.4]{ktt-cent}} and {\cite[Theorem 3.1]{ktt-noncentral}} for non-central case) that 
\begin{align*}
\chi_{\text{quasi}}(\mathcal{A};\,q) \ceq \#M(\mathcal{A};\,q)
\end{align*}
is a quasi-polynomial. 
\end{example}

Roughly speaking, the notion of characteristic quasi-polynomial is 
a $\mmoodd{q}$-version of the Ehrhart quasi polynomial. 
However, the characteristic quasi-polynomials possess some additional
properties. First, 
the constituents $g_r(t)$ ($r \in \{1, \dots, \tilde{n}\}$) 
of the characteristic quasi-polynomial 
$\chi_{\text{quasi}}(\mathcal{A};\,q)$ satisfy the 
$\gcd$-property. Secondly, the first constituent 
$g_1(t)$ (and equivalently, $g_r(t)$ for $r$ coprime to $\tilde{n}$) 
is known to be equal to the characteristic polynomial 
$\chi(\mathcal{A}, t)$ of the arrangement $\mathcal{A}$ (see \cite{ot}).
Furthermore, $g_{\tilde{n}}(t)$ is the characteristic polynomial of 
the associated toric arrangement \cite{lty, ty}.
The characteristic quasi-polynomial is an important concept, not only 
in the context of enumerative problems but also in its connections with 
arithmetic matroids and toric arrangements \cite{dm-ari, lty}.

\subsection{Equivariant Ehrhart theory}

In \cite{Stapledon}, Stapledon proposed 
an equivariant version of Ehrhart theory. 
Let $L \simeq \mathbb{Z}^n$ be a lattice and let $\G$ be a finite group acting 
linearly on $L$ via $\rho : \G \to \GL(L)$. 
Let $\mathcal{P}$ be a $\G$-invariant lattice polytope. 
For a positive integer $q\in\mathbb{Z}_{>0}$, the group $\G$ acts on 
the lattice points $q\mathcal{P} \cap L$. 
Let $\chi_{q\mathcal{P}}$ denote the character of this permutation representation.
Stapledon proved representation-theoretic analogues of 
several classical results in Ehrhart theory. 
For example, the map 
\[
F: \mathbb{Z}_{>0}\longrightarrow R(\G),\quad q\longmapsto 
F(q)=\chi_{q\mathcal{P}}
\]
is a quasi-polynomial of degree $\dim\mathcal{P}$ with the 
leading coefficient 
$\frac{\vol{\mathcal{P}}}{\#\G}\chi_{\st}$, where $\chi_{\st}$ is the regular (standard) character of $\G$ 
(\cite[Theorem 5.7, Corollary 5.9]{Stapledon}). 
It was also proved that the number 
of $\G$-orbits in $q\mathcal{P} \cap L$ is 
a quasi-polynomial in $q$. 

Stapledon also proved the following reciprocity. 
Let $F^\ast(q)=\chi_{q\mathcal{P*}}$ be the permutation representation 
of the lattice points in the interior of $q\mathcal{P}$. 
Then, from the Ehrhart reciprocity, the relation 
\begin{equation}
\label{eq:equivrecip}
F^*(q)=(-1)^{\dim\mathcal{P}}\det(\rho) F(-q)
\end{equation}
holds. 
Note that when $\G$ is the trivial group, these results recover 
the classical results in Ehrhart theory. 

\subsection{Towards an equivariant version of characteristic quasi-polynomials}

It is natural to consider $\mmoodd{q}$-version of the equivariant Ehrhart quasi-polynomial, 
namely, the equivariant characteristic quasi-polynomials for an arrangement invariant 
under a group action. For simplicity, in this paper, we do not consider hyperplanes, 
and instead focus solely on the $\mmoodd{q}$ permutation representation. 
Let $\G$ be a finite group and let $L \simeq \mathbb{Z}^\ell$ be a lattice.
Suppose that $\G$ acts linearly on $L$.
Then the action of $\G$ on $\X_q \ceq L/qL$ is naturally induced for each $q \in \mathbb{Z}_{>0}$. 
One of our problems is how the permutation character $\chi_{\X_q}$ of $L_q$ depends on $q$.
In the case where $\G$ is the Weyl group and $L$ is a lattice associated with a root system, 
there are several known results (e.g. {\cite{hai, rho}}), especially for $q \equiv 1 \mod{h}$, where $h$ is the Coxeter number. 
In this paper, we consider a general finite group action on a lattice. 
The main result of this paper is the following. 
\begin{theorem}[Theorem \ref{thm:main} below]
\label{Main result}
Consider the function $F : \mathbb{Z}_{>0} \longrightarrow R(\G)$ defined by $q \longmapsto \chi_{\X_q}$.
Then $F$ is a quasi-polynomial of degree $\ell$ ($ = \rank {L}$) with the $\gcd$-property.
\end{theorem}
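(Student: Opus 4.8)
The plan is to reduce the statement to a fixed-point count for each group element and then analyze that count via the Smith normal form. Since a virtual character is determined by its values on conjugacy classes, the character-table isomorphism identifies $R(\G)\otimes\mathbb{Q}$ with the space of $\mathbb{Q}$-valued class functions; under this fixed linear isomorphism, $F$ is a quasi-polynomial with the $\gcd$-property if and only if each coordinate function $q\mapsto\chi_{\X_q}(g)$ is, for a common period $\tilde n$. Indeed, a $\mathbb{Q}$-linear combination of quasi-polynomials sharing the period $\tilde n$ is again a quasi-polynomial of that period, and the $\gcd$-property is preserved under such combinations (each constituent depends on the residue $r$ only through $\gcd\{\tilde n, r\}$, so any linear combination does too). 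It therefore suffices to study, for each $g\in\G$, the permutation character value $\chi_{\X_q}(g)=\#\set{\bar x\in L/qL \mid g\bar x=\bar x}$.

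Writing $g\in\GL(L)$ as an integer matrix, this count equals the number of solutions $\bar x\in(\mathbb{Z}/q\mathbb{Z})^\ell$ of the linear congruence $(g-I)x\equiv 0 \pmod q$. Choosing unimodular matrices $U,V$ with $U(g-I)V=\diag(d_1,\dots,d_\ell)$ the Smith normal form, the count factors as $\prod_{i=1}^\ell\gcd(d_i,q)$, under the convention $\gcd(0,q)=q$. Hence, if $m_g\ceq\dim_{\mathbb{Q}}\ker(g-I)$ is the number of vanishing $d_i$ and $e_1,\dots,e_{\ell-m_g}$ are the nonzero elementary divisors, then $\chi_{\X_q}(g)=q^{m_g}\prod_j\gcd(e_j,q)$. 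This is also an instance of the quasi-polynomiality with $\gcd$-property of the number of solutions of a central system of congruences, as in the characteristic quasi-polynomial discussed above; see \cite{ktt-cent}.

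Each factor $q\mapsto\gcd(e_j,q)$ is periodic of period $e_j$, hence a quasi-polynomial of degree $0$. Taking $\tilde n\ceq\lcm$ of all elementary divisors of $g-I$ over all $g\in\G$ as the common period, the identity $\gcd(e_j,q)=\gcd(e_j,\gcd\{\tilde n,q\})$ holds because $e_j\mid\tilde n$, so each factor, and therefore the product $\prod_j\gcd(e_j,q)$, depends on $q$ only through $\gcd\{\tilde n,q\}$. Multiplying by the genuine monomial $q^{m_g}$ shows that $\chi_{\X_q}(g)$ is a quasi-polynomial of degree $m_g$ with the $\gcd$-property and period $\tilde n$. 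Combining over all $g$ via the first paragraph yields that $F$ is a quasi-polynomial with the $\gcd$-property.

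It remains to identify the degree. For $g=e$ one has $g-I=0$, so all $d_i$ vanish, the product is empty, and $\chi_{\X_q}(e)=q^\ell$ exactly; for every other $g$ one has $m_g\le\ell$, so no coordinate function exceeds degree $\ell$. Thus the coefficient of $t^\ell$ in each constituent is an element of $R(\G)\otimes\mathbb{Q}$ whose value at the identity equals $1$, hence is nonzero; and since this coefficient is built solely from the $g=e$ contribution $q^\ell$, which carries no periodic part, it is independent of the residue $r$. Therefore every constituent has degree exactly $\ell$, and $F$ is a quasi-polynomial of degree $\ell=\rank L$ with the $\gcd$-property. (When $\rho$ is faithful, the same computation shows the leading coefficient to be $\tfrac{1}{\#\G}\chi_{\st}$, matching Stapledon's Ehrhart analogue.) The main point requiring care is the $\gcd$-property: it hinges on the identity $\gcd(e_j,q)=\gcd(e_j,\gcd\{\tilde n,q\})$ for $e_j\mid\tilde n$, which must then be propagated through the $\mathbb{Q}$-linear character-table isomorphism to the $R(\G)$-valued constituents.
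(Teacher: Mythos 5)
Your proof is correct and follows essentially the same route as the paper: reduce to the fixed-point counts $\#\X_q^{\g}=\#\ker\left(\rho(\g)-\id\right)_q$ for each $\g\in\G$, evaluate them via the Smith normal form as $q^{\ell-r(\g)}\prod_j\gcd\{e_{\g,j},q\}$, and deduce the $\gcd$-property from $e_{\g,j}\mid\tilde{n}$. The only (cosmetic) difference is that you assemble the result through the coordinate functions $q\mapsto\chi_{\X_q}(\g)$, whereas the paper passes through the multiplicities $m(\chi_i;\,q)=(\chi_i,\chi_{\X_q})$; these are related by a fixed linear isomorphism, so the argument is the same.
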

We will also prove that the number of 
$\G$-orbits is a  quasi-polynomial 
with the $\gcd$-property (Corollary \ref{cor:G-orbit}). 

Let $\chi_1, \dots, \chi_k$ be irreducible characters of $\G$. Then $\chi_{\X_q}$ 
can be expressed as 
\begin{equation}
\label{eq:lincomb}
    \chi_{\X_q}=
m(\chi_1;\, q)\cdot\chi_1
+\cdots+
m(\chi_k;\, q)\cdot\chi_k, 
\end{equation}
with $m(\chi_k;\, q)\in\mathbb{Z}$. Theorem \ref{Main result} is equivalent 
to that each 
$m(\chi_i;\, q)$ is a $\mathbb{Z}$-valued quasi-polynomial in $q$ 
(Corollary \ref{multi}). 

There are several relations among these quasi-polynomials. 
In particular, there is a reciprocity-type relation between 
$m(\chi_i;\, q)$ and $m(\chi_i\otimes\delta_\rho;\, q)$, 
where $\delta_\rho=\det\rho$. More precisely, we have (Theorem \ref{thm:recip})
\[
m(\chi_i\otimes\delta_\rho;\, q)=(-1)^\ell m(\chi_i;\, -q).
\]
This implies the following relation (Corollary \ref{cor:rec}): 
\begin{equation}
\label{eq:selfdual}
F(-q)=(-1)^{\ell}\delta_\rho F(q). 
\end{equation}
Although the formula (\ref{eq:selfdual}) appears similar to 
(\ref{eq:equivrecip}), they are different in nature. 
It is important to note that (\ref{eq:equivrecip}) represents a reciprocity 
between $F(q)$ and $F^*(q)$, whereas (\ref{eq:selfdual}) is 
a self-duality of $F(q)$. 

We will also provide several explicit examples. 

\section{Quasi-polynomiality}

%\subsection{Representation theory}
\subsection{Group action and representation}

We recall several notions and basic facts 
about representations of finite groups \cite{serre}. 

Let $\G$ be a finite group.
Let $V$ be a finite-dimensional vector space over $\mathbb{C}$, and 
let $\GL(V)$ denote the group of linear isomorphisms of $V$ onto itself.
A \textit{(linear) representation} of $\G$ on $V$ is a homomorphism $\rho:\G \longrightarrow \GL(V)$.
In this paper, we assume that $\rho$ is injective. 
The space $V$ is called the \textit{representation space} of $\rho$.

The \textit{character} $\chi_\rho : \G \longrightarrow \mathbb{C}$ of the representation $\rho$ is 
the function defined by $\g \longmapsto \tr\rho(\g)$, where $\tr$ denotes the trace function.
The character $\chi_\rho$ is constant on each conjugacy class. 
A function $\phi:\G\longrightarrow \mathbb{C}$ is called a class function 
if $\phi$ is constant on each conjugacy class. 
For functions $\phi,\psi: \G\longrightarrow\mathbb{C}$, define the inner product $(\phi, \psi)$ by 
\begin{align*}
(\phi,\psi) = \dfrac{1}{\#\G}\sum_{\g \in \G}\phi(\g)\overline{\psi(\g)},
\end{align*}
where $\bar{z}$ denotes the complex conjugate of $z \in \mathbb{C}$. 
Let $\chi_1, \dots, \chi_k$ be the set of all irreducible characters of $\G$. 
Then $\chi_1, \dots, \chi_k$ form an orthonormal basis of the space of 
class functions. In particular, $(\chi_i, \chi_j)=\delta_{ij}$. 
Thus, if a class function $\chi$ is expressed as a linear combination 
of irreducible characters $\chi=m_1\chi_1+\dots+m_k\chi_k$, then we have 
$m_i=(\chi, \chi_i)$. 

Let $\subG$ be a subgroup of $\G$. The restriction of 
a class function $\chi: \G\longrightarrow\mathbb{C}$ to $\subG$ is clearly 
a class function on $\subG$, which is denoted by 
$\Res^\G_\subG \chi : \subG \longrightarrow \mathbb{C}$. Conversely, 
for a class function $\varphi:\subG \longrightarrow \mathbb{C}$, 
define the \textit{induced function} $\Ind^\G_\subG \varphi:\G \longrightarrow \mathbb{C}$ 
by
\begin{equation}
    \left(\Ind^\G_\subG\varphi\right)(\g) = \dfrac{1}{\#\subG}\sum_{\substack{\eta \in \G\\\eta^{-1}\gamma\eta \in \subG}}\varphi(\eta^{-1}\gamma\eta).
\end{equation}
These two operators are related by the following Frobenius reciprocity: 
\begin{equation}
\label{FrobRecip}
\left(\chi,\, \Ind^\G_\subG\varphi\right) = \left(\Res^\G_\subG\chi,\, \varphi\right).
\end{equation}

Recall that the \textit{representation ring} $R(\G)$ of $\G$ is $\bigoplus_{V}\mathbb{Z}[V]/{\sim}$, where 
$V$ runs all finite-dimensional representations of $\G$, and $\sim$ is an equivalence 
relation generated by $[V]\sim [V']$ for isomorphic representations $V\simeq V'$ 
and $[V_1\oplus V_2]\sim [V_1]+[V_2]$. The multiplication is defined by 
$[V_1]\cdot[V_2]=[V_1\otimes V_2]$. The character gives a natural isomorphism 
of abelian groups 
\[
R(\G) \simeq \mathbb{Z}\chi_1 \oplus \cdots \oplus \mathbb{Z}\chi_k.
\]
The trivial representation $\rho_{\bm{1}}$ is the unit element in 
$R(\G)$. The character of $\rho_{\bm{1}}$ is denoted by $\bm{1}$.

Suppose that $\G$ acts on a finite set $X$.
Let $\mathbb{C}X$ denote the vector space based on $X$, that is, $\mathbb{C}X = \bigoplus_{x \in X}\mathbb{C}x$.
This gives rise to a natural representation $\rho_X : \G \longrightarrow \GL(\mathbb{C}X)$, 
which is called the \textit{permutation representation} of $X$. 
In the case of $X = \G$ with action defined by the left multiplication, 
it is called the \textit{regular (standard) representation}, 
denoted by $\rho_{\st}$. 
Note that the character $\chi_{\st}$ of the regular representation 
satisfies the following. 
\begin{align*}
\chi_{\st} = \sum_{i=1}^k\chi_i(1)\chi_i,\qquad \chi_\st(\g) = \begin{cases*}
\#\G & if $\g = 1$;\,\\
0 & otherwise.
\end{cases*}
\end{align*}
%where $\chi_1,\ldots,\chi_k$ are the irreducible characters of $\G$.

For $x \in X$, the \textit{$\G$-orbit} $\Gx$ 
and the \textit{isotropy subgroup} $\G_x$ 
are defined as follows: 
\[
\begin{split}
\Gx &= \set{\g x \in X \mid \g \in \G},\\
\G_x &= \set{\g \in \G \mid \g x = x}. 
\end{split}
\]

%\subsection{Quasi-polynomiality of the multiplicity}
\subsection{Multiplicities of irreducible decompositions}

Let $L$ be a lattice, and $\{\beta_1,\ldots,\beta_\ell\}$ be a $\mathbb{Z}$-basis of $L$, 
that is, 
$L = \mathbb{Z}\beta_1 \oplus \cdots \oplus \mathbb{Z}\beta_\ell \simeq \mathbb{Z}^\ell$.
We identify an element $x = x_1\beta_1 + \cdots + x_\ell\beta_\ell$ of $L$ with the row vector $x = (x_1,\ldots,x_\ell)$ of $\mathbb{Z}^\ell$.

Let $\G$ be a finite group. 
Let $\rho : \G \longrightarrow \GL(L)$ be a group homomorphism. 
Let us denote the representation matrix of $\rho(\gamma)$ by 
$R_\gamma$, and we consider the right multiplication, namely, 
\begin{align*}
\rho(\g) : L\longrightarrow L,\quad x \longmapsto xR_\g. 
\end{align*}

For $q \in \mathbb{Z}_{>0}$, define $\mathbb{Z}_q \ceq \mathbb{Z}/q\mathbb{Z}$.
We will consider the following \textit{$q$-reduction} of $x = (x_1,\ldots,x_\ell) \in \mathbb{Z}^\ell$:
\begin{align*}
[x]_q \ceq ([x_1]_q,\ldots,[x_\ell]_q) \in \mathbb{Z}_q^\ell,
\end{align*}
where $[x_i]_q = x_i + q\mathbb{Z} \in \mathbb{Z}_q$.
We similarly consider the $q$-reduction of an integer matrix $A = (a_{ij})_{ij}$: 
\begin{align*}
[A]_q \ceq \left([a_{ij}]_q\right)_{ij}.
\end{align*}
Let $\varphi :\mathbb{Z}^\ell \longrightarrow \mathbb{Z}^\ell$ be a $\mathbb{Z}$-homomorphism represented by a $\ell \times \ell$ integer matrix $A$.
We can define the induced morphism $\varphi_q : \mathbb{Z}_q^\ell \longrightarrow \mathbb{Z}_q^\ell$ by
\begin{align*}
x \longmapsto x[A]_q.
\end{align*}

Let $\X_q \ceq L/qL \simeq (\mathbb{Z}/q\mathbb{Z})^\ell$.
The action of $\G$ on $\X_q$ is induced by $\rho(\g)_q : \X_q \to \X_q$.
%Let 
%$\chi_{\X_q} = m(\chi_1;\,q) \cdot \chi_1 + \cdots + 
%m(\chi_k;\,q) \cdot \chi_k$ be the irreducible decomposition 
%of $\chi_{\X_q}$. 
Let $\chi_{\X_q}$ denote the character of the permutation representation of $\X_q$, and consider 
its irreducible decomposition:
\begin{align*}
\chi_{\X_q} = m(\chi_1;\,q) \cdot \chi_1 + \cdots + m(\chi_k;\,q) \cdot \chi_k,
\end{align*}
where $m(\chi_i;\,q)$ denotes the multiplicity of $\chi_i$ in $\chi_{\X_q}$.
Since $\chi_{\X_q}(\g)$ is equal to the number of elements in $\X_q$ fixed by $\g \in \G$, we have
\begin{align}
m(\chi_i;\,q) = (\chi_i,\chi_{\X_q}) = \dfrac{1}{\#\G}\sum_{\g \in \G}\chi_i(\g)\overline{\chi_{\X_q}(\g)} = \dfrac{1}{\#\G}\sum_{\g \in \G}\chi_i(\g) \cdot \#\X_q^\g, \label{m(i,q)}
\end{align}
where $\X_q^\g \ceq \set{x \in \X_q \mid \g x = x}$.
Thus, by studying the properties of $\#\X_q^\g$, we can determine how $m(\chi_i;\,q)$ depends on $q$.
Note that for the trivial character $\bm{1}$, the multiplicity $m(\bm{1};\, q)$ represents the number of $\G$-orbits of $\X_q$, according to Burnside's lemma. 

The fixed point set $\X_q^\g$ is expressed as 
\begin{align*}
\X_q^\g
 &= \set{x \in \X_q \mid \g x = x}\\
 &= \set{x \in \X_q \mid x[R_\g]_q = x}\\
 &= \set{x \in \X_q \mid x[R_\g - I_\ell]_q = 0},
\end{align*}
where $I_\ell$ is the identity matrix of size $\ell$.
Therefore, $\X_q^\g$ is equal to the kernel of the induced morphism 
$\left( \rho(\g) - \id\right)_q$. 
The cardinality of the kernel of this induced morphism is known to be quasi-monomial, 
as shown in \cite{ktt-cent}: 

\begin{lemma}[{\cite[Lemma 2.1]{ktt-cent}}]
Let $\varphi : \mathbb{Z}^\ell \longrightarrow \mathbb{Z}^\ell$ be a $\mathbb{Z}$-homomorphism.
Then The cardinality of the kernel of this induced morphism $\varphi_q : \mathbb{Z}_q^\ell \longrightarrow \mathbb{Z}_q^\ell$ is a quasi-monomial in $q$.
Furthermore, suppose $\varphi$ is represented by a matrix $A$.
Then the quasi-monomial $\#\ker{\varphi_q}$ can be expressed as
\begin{align}
\#\ker{\varphi_q} = \left(\prod_{j=1}^r \gcd\{e_j,\, q\} \right) q^{\ell - r}, \label{sl2.1}
\end{align}
where $r \ceq \rank{A}$ and $e_1,\ldots,e_r \in \mathbb{Z}_{>0}$, with $e_1 \mid e_2 \mid \cdots \mid e_r$, 
are the elementary divisors of $A$.
Hence, the quasi-monomial $\#\ker{\varphi_q}$ has the $\gcd$-property and the minimum period $e_r$.
If $r = 0$, we consider $e_0$ to be $1$.
\end{lemma}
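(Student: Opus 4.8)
The plan is to reduce the computation of $\#\ker\varphi_q$ to the diagonal (Smith normal form) case and then count solutions coordinate by coordinate. First I would write $A = U D V$ with $U, V \in \GL_\ell(\mathbb{Z})$ unimodular and $D = \diag(e_1, \ldots, e_r, 0, \ldots, 0)$ the Smith normal form, so that $e_1 \mid \cdots \mid e_r$ are the elementary divisors and $r = \rank A$. Because $\det U, \det V \in \{\pm 1\}$ are units in $\mathbb{Z}_q$, the reductions $[U]_q, [V]_q$ belong to $\GL_\ell(\mathbb{Z}_q)$ and hence act as bijections on $\mathbb{Z}_q^\ell$. Since $\varphi_q$ sends a row vector $x$ to $x[A]_q = x[U]_q[D]_q[V]_q$ and right multiplication by $[V]_q$ is invertible, we have $x[A]_q = 0 \iff (x[U]_q)[D]_q = 0$. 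Writing $\psi_q$ for the diagonal map $y \mapsto y[D]_q$ and using that $x \mapsto x[U]_q$ is a bijection of $\mathbb{Z}_q^\ell$, I conclude $\#\ker\varphi_q = \#\ker\psi_q$, so it suffices to treat $D$.

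Next I would count $\#\ker\psi_q$ directly. For $y = (y_1, \ldots, y_\ell)$, the equation $y[D]_q = 0$ decouples into the independent scalar conditions $e_j y_j \equiv 0 \pmod{q}$ for $1 \le j \le r$, while the last $\ell - r$ coordinates are unconstrained. The endomorphism $y_j \mapsto e_j y_j$ of $\mathbb{Z}_q$ has kernel of cardinality $\gcd\{e_j, q\}$, and each of the $\ell - r$ free coordinates contributes a factor $q$; multiplying these counts gives
\begin{align*}
\#\ker\varphi_q = \left(\prod_{j=1}^r \gcd\{e_j, q\}\right) q^{\ell - r},
\end{align*}
which is the asserted formula (the empty product being $1$ when $r = 0$).

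Finally I would extract the three structural claims from this formula. For $q \equiv \nu \pmod{e_r}$ the divisibilities $e_j \mid e_r$ give $q \equiv \nu \pmod{e_j}$, hence $\gcd\{e_j, q\} = \gcd\{e_j, \nu\}$; thus the $\nu$-th constituent is the monomial $\bigl(\prod_{j=1}^r \gcd\{e_j, \nu\}\bigr) t^{\ell - r}$, showing that $\#\ker\varphi_q$ is a quasi-monomial of degree $\ell - r$ with period $e_r = \lcm\{e_1, \ldots, e_r\}$. The $\gcd$-property follows from the elementary identity $\gcd\{a, q\} = \gcd\{a, \gcd\{b, q\}\}$ valid whenever $a \mid b$: taking $a = e_j$ and $b = e_r$ shows each factor, and hence the leading coefficient $\prod_{j} \gcd\{e_j, q\}$, depends on $q$ only through $\gcd\{e_r, q\}$. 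I expect the one genuinely delicate point to be the \emph{minimality} of the period $e_r$: here I would observe that the leading coefficient attains its maximal value $e_1 \cdots e_r$ exactly when $e_r \mid q$, so that the set $e_r \mathbb{Z}_{>0}$ must be invariant under translation by any period $T$; this forces $e_r \mid T$ and hence $T \ge e_r$. The remaining verifications are routine once the Smith normal form reduction in the first step is set up correctly.
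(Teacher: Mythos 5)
Your proof is correct and follows essentially the same route as the paper: reduce to the Smith normal form via unimodular matrices (whose $q$-reductions remain invertible over $\mathbb{Z}_q$) and count coordinatewise. The only differences are cosmetic --- you count the kernel of the diagonal map directly where the paper counts the image and divides $q^\ell$ by it --- and you additionally verify the $\gcd$-property and the minimality of the period $e_r$ (via the observation that the leading coefficient attains its maximum $e_1\cdots e_r$ exactly when $e_r\mid q$, forcing $e_r$ to divide any period), steps the paper's proof omits by deferring to \cite{ktt-cent}; that minimality argument is sound.
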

\begin{proof}
Here, we only review quasi-monomiality.
For further details, see {\cite[Lemma 2.1]{ktt-cent}}.

Since $\#\ker{\varphi_q} = q^\ell / \#\image{\varphi_q}$, we will study $\#\image{\varphi_q}$.
Consider the Smith normal form
\begin{align*}
SAT = \begin{pmatrix}
e_1\\
& \ddots\\
&& e_r\\ 
&&& O
\end{pmatrix},\qquad r = \rank{A}, \quad e_1,\ldots,e_r,\in \mathbb{Z}_{>0}, \quad e_1 \mid e_2 \mid \cdots \mid e_r,
\end{align*}
where $S$ and $T$ are $\ell \times \ell$ unimodular matrices.
Since unimodularity is preserved under $q$-reductions, we may assume that $A$ is a diagonal matrix $\diag(e_1,\ldots,e_r,0,\ldots,0)$ from the outset.
Then, for $x = (x_1,\ldots,x_\ell) \in \mathbb{Z}_q^\ell$, we have
\begin{align*}
\varphi_q(x) = \left( [e_1]_qx_1,\,\ldots,\, [e_r]_qx_r,\, 0,\,\ldots,\, 0\right)
\end{align*}
and hence $\image{\varphi_q} = [e_1]_q\mathbb{Z}_q \times \cdots \times [e_r]_q\mathbb{Z}_q$.
Therefore, 
\begin{align*}
\#\image{\varphi_q} = \dfrac{q^r}{\prod_{j=1}^r \gcd\{e_j,\, q\}},
\end{align*}
and we obtain \eqref{sl2.1}.
\end{proof}

\begin{corollary}\label{multi}
The multiplicity $m(\chi_i;\,q)$ of $\chi_i$ in $\chi_{\X_q}$ is a quasi-polynomial in $q$. More explicitly, 
\begin{align}
m(\chi_i;\,q)  = \dfrac{1}{\#\G} \sum_{\g \in \G}\chi_i(\g) \cdot \left(\prod_{j=1}^{r(\g)} \gcd\{e_{\g,j},\, q\} \right) q^{\ell - r(\g)}, \label{st2.2}
\end{align}
where $r(\g) \ceq \rank{(R_\g - I_\ell)}$ and $e_{\g,1},\ldots,e_{\g,r(\g)} \in \mathbb{Z}_{>0}$ with $e_{\g,1} \mid e_{\g,2} \mid \cdots \mid e_{\g,r(\g)}$, are the elementary divisors of $R_\g-I_\ell$.
\end{corollary}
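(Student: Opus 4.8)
The plan is to substitute the fixed-point counts $\#\X_q^\g$ into the character-theoretic expression \eqref{m(i,q)} and to read off the conclusion directly from the preceding Lemma. Since $\chi_{\X_q}(\g) = \#\X_q^\g$ is a non-negative integer, its complex conjugate equals itself, so \eqref{m(i,q)} already presents $m(\chi_i;\, q)$ as the $\mathbb{C}$-linear combination $\frac{1}{\#\G}\sum_{\g\in\G}\chi_i(\g)\cdot\#\X_q^\g$. This reduces the entire problem to understanding how each single term $\#\X_q^\g$ depends on $q$.

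First I would recall the identification made just before the statement: the fixed-point set $\X_q^\g$ is precisely the kernel of the induced morphism $\left(\rho(\g) - \id\right)_q$, that is, the $q$-reduction of the $\mathbb{Z}$-homomorphism represented by the integer matrix $R_\g - I_\ell$. This places each summand squarely within the scope of the Lemma. Applying it with $A = R_\g - I_\ell$, and writing $r(\g) = \rank(R_\g - I_\ell)$ together with the elementary divisors $e_{\g,1} \mid \cdots \mid e_{\g,r(\g)}$ of this matrix, gives
\begin{align*}
\#\X_q^\g = \left(\prod_{j=1}^{r(\g)} \gcd\{e_{\g,j},\, q\}\right) q^{\ell - r(\g)}.
\end{align*}
Substituting this into \eqref{m(i,q)} yields the explicit formula \eqref{st2.2} at once.

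It then remains to see that the right-hand side of \eqref{st2.2} is a quasi-polynomial in $q$. By the Lemma each summand $\#\X_q^\g$ is a quasi-monomial of minimum period $e_{\g,r(\g)}$, and scaling by the constant $\chi_i(\g)/\#\G$ preserves this. Taking $\tilde{n} \ceq \lcm\{e_{\g,r(\g)} \mid \g \in \G\}$ as a common period, on each residue class $q \equiv s \mod{\tilde{n}}$ every factor $\gcd\{e_{\g,j},\, q\}$ is constant (indeed $e_{\g,j} \mid e_{\g,r(\g)} \mid \tilde{n}$, so $\gcd\{e_{\g,j},\, q\}$ depends only on $q \bmod \tilde{n}$); hence each summand restricts to a monomial $c_{\g,s}\, q^{\ell - r(\g)}$, and their sum is an honest polynomial in $q$ on that class. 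Therefore $m(\chi_i;\, q)$ is a quasi-polynomial with period dividing $\tilde{n}$.

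I expect no genuine obstacle here: the substantive content—the quasi-monomiality of $\#\X_q^\g$—has already been delivered by the Lemma, and what remains is only the elementary fact that a finite $\mathbb{C}$-linear combination of quasi-monomials sharing a common period is a quasi-polynomial. The one mild point worth noting is that, although \eqref{st2.2} carries the complex coefficients $\chi_i(\g)$, the value $m(\chi_i;\, q) = (\chi_i, \chi_{\X_q})$ is a non-negative integer for every $q$; thus each constituent, being a polynomial that takes integer values on an infinite arithmetic progression, necessarily has rational coefficients.
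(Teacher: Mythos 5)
Your proposal is correct and follows exactly the paper's route: substitute the quasi-monomial formula \eqref{sl2.1} for $\#\X_q^\g=\#\ker\left(\rho(\g)-\id\right)_q$ into the inner-product expression \eqref{m(i,q)}, which the paper states in a single line. Your additional remarks on the common period $\tilde{n}$ and the integrality of the values are accurate elaborations of what the paper leaves implicit.
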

\begin{proof}
The equation \eqref{st2.2} is given by \eqref{m(i,q)} and \eqref{sl2.1}.
\end{proof}

Next, we present some properties of $m(\chi_i;\,q)$.

\begin{proposition}
The quasi-polynomial $m(\chi_i;\,q)$ has the $\gcd$-property with a period 
\begin{align*}
\tilde{n} \ceq \lcm\set{e_{\g,r(\g)} \mid \g \in \G}.
\end{align*}
Furthermore, the minimum period of the quasi-polynomial $m(\bm{1};\, q)$ is equal to $\tilde{n}$. 
\end{proposition}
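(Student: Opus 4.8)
The plan is to read off both assertions from the explicit formula \eqref{st2.2}, combined with the fact (from the cited Lemma) that each summand $\#\X_q^\g=\bigl(\prod_{j=1}^{r(\g)}\gcd\set{e_{\g,j},q}\bigr)q^{\ell-r(\g)}$ has the $\gcd$-property and \emph{minimum} period $e_{\g,r(\g)}$. For the $\gcd$-property with period $\tilde n$, I would first record the elementary arithmetic fact that if $e\mid n$ then $\gcd\set{e,q}=\gcd\set{e,\gcd\set{n,q}}$ for every $q$; consequently a quasi-polynomial with the $\gcd$-property and period $e$ still has the $\gcd$-property when regarded with any period $n$ that is a multiple of $e$. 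Applying this to each $\#\X_q^\g$, whose period $e_{\g,r(\g)}$ divides $\tilde n=\lcm\set{e_{\g,r(\g)}\mid\g\in\G}$ by construction, every term in \eqref{st2.2} acquires the $\gcd$-property with the \emph{common} period $\tilde n$. Since a linear combination of quasi-polynomials that all have the $\gcd$-property with a fixed period again has it (equal values of $\gcd\set{\tilde n,r}$ force each constituent, hence the combination, to agree), the function $m(\chi_i;q)=\frac{1}{\#\G}\sum_{\g}\chi_i(\g)\,\#\X_q^\g$ has the $\gcd$-property with period $\tilde n$ for every $i$.

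For the minimality statement I would pass to $N(q):=\#\G\cdot m(\bm{1};q)=\sum_{\g\in\G}\#\X_q^\g$, which has the same minimum period as $m(\bm{1};q)$. By the $\gcd$-property just proved, the constituent of $N$ on the class $\gcd\set{\tilde n,q}=d$ is the polynomial $G^{(d)}(t)=\sum_{\g}\bigl(\prod_{j=1}^{r(\g)}\gcd\set{e_{\g,j},d}\bigr)t^{\ell-r(\g)}$, one for each divisor $d\mid\tilde n$. Writing $n_0$ for the minimum period and using the standard fact that the minimum period of a quasi-polynomial divides every period, we have $n_0\mid\tilde n$; hence it suffices to show that for each prime $p\mid\tilde n$, setting $a=v_p(\tilde n)\ge1$, the integer $\tilde n/p$ is \emph{not} a period, since then $v_p(n_0)=v_p(\tilde n)$ for all $p$ and $n_0=\tilde n$.

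Fix such a prime $p$ and compare $q_1=\tilde n$ with $q_2=(\tilde n/p)(p-1)$: both lie in the class $0\bmod(\tilde n/p)$, yet $\gcd\set{\tilde n,q_1}=\tilde n$ while $\gcd\set{\tilde n,q_2}=\tilde n/p$. If $\tilde n/p$ were a period, the constituents of $N$ on these subclasses would coincide, forcing $G^{(\tilde n)}=G^{(\tilde n/p)}$, so it is enough to show these two polynomials differ. I would compute the difference coefficientwise: since $e_{\g,j}\mid\tilde n$ one has $\gcd\set{e_{\g,j},\tilde n}=e_{\g,j}$, and $\gcd\set{e_{\g,j},\tilde n/p}$ equals $e_{\g,j}$ or $e_{\g,j}/p$ according as $v_p(e_{\g,j})<a$ or $=a$, whence
\[
G^{(\tilde n)}(t)-G^{(\tilde n/p)}(t)=\sum_{\g:\,c_\g\ge1}\Bigl(\prod_{j=1}^{r(\g)}e_{\g,j}\Bigr)\bigl(1-p^{-c_\g}\bigr)\,t^{\ell-r(\g)},
\]
where $c_\g:=\#\set{j\mid v_p(e_{\g,j})=a}$. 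Because $a=v_p\bigl(\lcm\set{e_{\g,r(\g)}\mid\g\in\G}\bigr)$, some $\g$ satisfies $v_p(e_{\g,r(\g)})=a$, i.e. $c_\g\ge1$, so the sum is nonempty; and every coefficient above is strictly positive, so grouping by powers of $t$ there is no cancellation and the difference is a nonzero polynomial, giving the required contradiction.

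The hard part is exactly this last no-cancellation step, and it is also the reason the minimality is asserted only for $\bm{1}$: for a general $\chi_i$ the character values $\chi_i(\g)$ enter as coefficients and could conspire to cancel, so that the minimum period of $m(\chi_i;q)$ may well be a proper divisor of $\tilde n$. For $\bm{1}$ all coefficients equal $1$, every term contributes with a positive sign, and the difference of constituents is forced to be nonzero. The only remaining care is in citing the two background facts used above — that the minimum period of a quasi-polynomial divides every period, and that the constituents are genuinely indexed by $\gcd\set{\tilde n,q}$ — both of which follow from the $\gcd$-property and the uniqueness of the constituent polynomials.
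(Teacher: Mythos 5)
Your proof is correct and follows essentially the same route as the paper: the $\gcd$-property is obtained exactly as there (from $e_{\g,j}\mid\tilde n$, hence $\gcd\{e_{\g,j},q\}=\gcd\{e_{\g,j},\gcd\{\tilde n,q\}\}$), and the minimality rests on the same key observation that for the trivial character all terms in a difference of constituents are nonnegative with at least one strictly positive, so no cancellation can occur. The only cosmetic difference is that the paper compares $g_r$ with $g_{\tilde n}$ directly for every $r<\tilde n$ (picking some $\g$ with $e_{\g,r(\g)}\nmid r$), whereas you first invoke that the minimum period divides $\tilde n$ and then compare $G^{(\tilde n)}$ with $G^{(\tilde n/p)}$ prime by prime.
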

Note that if $\chi_i \neq \bm{1}$, we do not know if $\tilde{n}$ is the minimum period.
\begin{proof}
Let $\g \in \G$ be an element that is not the identity, and let $e_{\g,1},\ldots,e_{\g,r(\g)}$ be the elementary divisors of $R_\g-I_\ell$.
Since $e_{\g,j}$ divides $\tilde{n}$ for $j \in \{1,\ldots,r(\g)\}$, we have
\begin{align*}
\prod_{j=1}^{r(\g)} \gcd\{e_{\g,j},\,q\} 
 = \prod_{j=1}^{r(\g)} \gcd\{e_{\g,j},\, \tilde{n},\, q\} 
 = \prod_{j=1}^{r(\g)} \gcd\left\{e_{\g,j},\, \gcd\{\tilde{n},\, q\}\right\}.
\end{align*}
Hence $m(\chi_i;\,q)$ depends on $q$ only through $\gcd\{\tilde{n},\, q\}$, which means that 
$\tilde{n}$ is a period of $m(\chi_i;\, q)$.

Let $g_{1}(t),\ldots, g_{\tilde{n}}(t) \in \mathbb{Z}[t]$ denote the constituents of the quasi-polynomial $m(\bm{1};\, q)$.
Since $\tilde{n}$ is divisible by $e_{\g,r(\g)}$ for all $\g \in \G$, we have
\begin{align}
    g_{\tilde{n}}(t) = \dfrac{1}{\#\G}\sum_{\g \in \G}\left(\prod_{j = 1}^{r(\g)}e_{\g,j}\right) t^{\ell - r(\g)} \label{gtilden}
\end{align}
from equation \eqref{st2.2}.
Suppose that $r < \tilde{n}$.
Then there exists $\g \in \G$ such that $\gcd\{e_{\g,r(\g)},r\} \neq e_{\g,r(\g)}$.
Since $\gcd\{e_{\g,j},r\} \leq e_{\g,j}$ for any $\g \in \G$ and $j \in \{1,\ldots,r(\g)\}$, we conclude that
\begin{align*}
    g_r(t) = \dfrac{1}{\#\G} \sum_{\g \in \G}\left(
    \prod_{j=1}^{r(\g)}\gcd\{e_{\g,j},q\}    \right) t^{\ell-r(\g)} \neq g_{\tilde{n}}(t)
\end{align*}
by the equations \eqref{st2.2} and \eqref{gtilden}, and hence $r$ is not a period.
Therefore, $\tilde{n}$ is the minimum period of $m(\bm{1};\, q)$.
\end{proof}

\begin{proposition}\label{p2.4}
The leading term of the quasi-polynomial 
$m(\chi_i;\,q)$ is $\frac{\chi_i(1)}{\#\G}q^\ell$.
\end{proposition}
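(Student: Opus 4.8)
The plan is to extract the leading term directly from the explicit expression \eqref{st2.2} for $m(\chi_i;\,q)$ obtained in Corollary \ref{multi}. Recall that
\[
m(\chi_i;\,q) = \frac{1}{\#\G}\sum_{\g\in\G}\chi_i(\g)\left(\prod_{j=1}^{r(\g)}\gcd\{e_{\g,j},\,q\}\right)q^{\ell-r(\g)},
\]
where $r(\g)=\rank(R_\g-I_\ell)$. The first observation I would make is that each factor $\gcd\{e_{\g,j},\,q\}$ is a bounded periodic function of $q$ (it never exceeds the constant $e_{\g,j}$), so as a function of $q$ it contributes degree $0$. Consequently the summand indexed by $\g$ has degree exactly $\ell-r(\g)$ in $q$, and the highest possible degree is achieved by those $\g$ with the smallest value of $r(\g)$.

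The key step is then to pin down which $\g$ minimize $r(\g)$. Since $r(\g)=\rank(R_\g-I_\ell)\ge 0$, with equality if and only if $R_\g=I_\ell$, the hypothesis that $\rho$ is injective forces $\g=1$ to be the unique group element with $r(\g)=0$; every $\g\neq 1$ satisfies $r(\g)\ge 1$ and hence contributes a summand of degree at most $\ell-1$. Therefore only the identity term reaches degree $\ell$. Evaluating this term (the product over $j$ is empty, hence equal to $1$, and $\chi_i(1)$ is the dimension of the $i$-th irreducible representation) gives the contribution $\frac{1}{\#\G}\chi_i(1)\,q^\ell$.

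Finally I would note that this degree-$\ell$ term is the same for every residue class: passing to a constituent only replaces each $\gcd\{e_{\g,j},\,q\}$ by a constant, which does not affect the degree-$\ell$ part, so the leading term of the quasi-polynomial $m(\chi_i;\,q)$ is $\frac{\chi_i(1)}{\#\G}q^\ell$ as claimed. I do not anticipate a genuine obstacle here; the only point that must be handled with care is the appeal to the injectivity of $\rho$, which is precisely what guarantees that $\g=1$ is the unique element with $r(\g)=0$, and thus that no contribution other than the identity term can reach the top degree.
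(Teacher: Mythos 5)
Your proposal is correct and follows essentially the same route as the paper: it reads off the leading term from the explicit formula \eqref{st2.2}, uses the injectivity of $\rho$ to see that $r(\g)=0$ only for $\g=1$, and concludes that the identity term alone contributes $\frac{\chi_i(1)}{\#\G}q^\ell$. The paper's proof is just a terser version of the same argument.
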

\begin{proof}
Since $\rho$ is injective, $r(\g)=0$ holds if and only if $\g$ is the 
identity. 
Therefore, by Corollary \ref{multi}, the leading term of $m(\chi_i;\,q)$ is $q^\ell$ with the coefficient $\frac{\chi_i(1)}{\#\G}$.
\end{proof}

%\subsection{Quasi-polynomiality of the permutation representation}
\subsection{Permutation representations}

Since each multiplicity $m(\chi_i;\,q)$ is a quasi-polynomial, the quasi-polynomiality of the function $F : q \longmapsto \chi_{\X_q}$ follows immediately.
The following theorem is the main result of this paper.
\begin{theorem}[Restatement of Theorem \ref{Main result}]
\label{thm:main}
Consider the function $F : \mathbb{Z}_{>0} \longrightarrow R(\G)$ defined by $q \longmapsto \chi_{\X_q}$.
Then $F$ is a quasi-polynomial of degree $\ell$. Furthermore, 
$F$ has the $\gcd$-property, the minimum period $\tilde{n}$, and 
the leading coefficient of the quasi-polynomial $\chi_{\X_q}$ is $\frac{\chi_\st}{\#\G}$.
\end{theorem}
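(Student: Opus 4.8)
The plan is to deduce every assertion from the $\mathbb{Z}$-basis decomposition of the representation ring together with the three component-wise facts already established. Recall that the character isomorphism gives $R(\G) \simeq \mathbb{Z}\chi_1 \oplus \cdots \oplus \mathbb{Z}\chi_k$, hence $R(\G)[t] = \mathbb{Z}[t]\chi_1 \oplus \cdots \oplus \mathbb{Z}[t]\chi_k$. Consequently a function $F : \mathbb{Z}_{>0} \to R(\G)$ is a quasi-polynomial admitting $\tilde{n}$ as a period (respectively, having the $\gcd$-property) if and only if each of its coordinate functions, with respect to the basis $\chi_1,\ldots,\chi_k$, is. Under the decomposition $\chi_{\X_q} = \sum_{i=1}^k m(\chi_i;\,q)\chi_i$ these coordinate functions are precisely the multiplicities $m(\chi_i;\,q)$, each of which is a quasi-polynomial with the $\gcd$-property admitting $\tilde{n}$ as a period, by Corollary \ref{multi} and the preceding Proposition on the $\gcd$-property. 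This already yields that $F$ is a quasi-polynomial with the $\gcd$-property and that $\tilde{n}$ is a period.

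For the degree and the leading coefficient I would assemble the leading terms. By Proposition \ref{p2.4} the leading term of $m(\chi_i;\,q)$ is $\frac{\chi_i(1)}{\#\G}q^\ell$; since $\chi_i(1)\ge 1$ for every $i$, each multiplicity has degree exactly $\ell$, and so does $F$, every constituent having top-degree term $\frac{\chi_i(1)}{\#\G}q^\ell$ in its $\chi_i$-coordinate. Summing these leading terms against the basis and invoking the identity $\chi_\st = \sum_{i=1}^k \chi_i(1)\chi_i$ gives
\begin{align*}
\sum_{i=1}^k \frac{\chi_i(1)}{\#\G}q^\ell \cdot \chi_i = \frac{q^\ell}{\#\G}\sum_{i=1}^k \chi_i(1)\chi_i = \frac{\chi_\st}{\#\G}\,q^\ell,
\end{align*}
so the leading coefficient of $F$ is $\frac{\chi_\st}{\#\G}$, as claimed.

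It remains to upgrade ``$\tilde{n}$ is a period'' to ``$\tilde{n}$ is the minimum period,'' and here I would single out the trivial character. The component $m(\bm{1};\,q)$ is one of the coordinate functions of $F$, and by the preceding Proposition its minimum period is exactly $\tilde{n}$. If $n'$ were any period of $F$, it would be a period of every coordinate function, in particular of $m(\bm{1};\,q)$, forcing $\tilde{n} \mid n'$; hence no proper divisor of $\tilde{n}$ can serve as a period of $F$, and $\tilde{n}$ is minimal.

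I do not anticipate a genuine obstacle, as the statement is essentially a synthesis of the component-wise results; the only point requiring care is the passage between periods of the vector-valued $F$ and periods of its coordinates, which becomes routine once $R(\G)[t]$ is identified with $\bigoplus_{i=1}^k \mathbb{Z}[t]\chi_i$. The mild subtlety worth spelling out is that the minimum-period claim genuinely needs the $m(\bm{1};\,q)$ computation rather than just the generic bound, since for $\chi_i \neq \bm{1}$ we do not know that $\tilde{n}$ is minimal for the individual multiplicity.
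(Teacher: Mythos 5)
Your proposal is correct and follows essentially the same route as the paper: reduce everything to the coordinate functions $m(\chi_i;\,q)$ via $R(\G)\simeq\bigoplus_i\mathbb{Z}\chi_i$, invoke Corollary \ref{multi} and the $\gcd$-property proposition for quasi-polynomiality and the period, use Proposition \ref{p2.4} together with $\chi_\st=\sum_i\chi_i(1)\chi_i$ for the leading coefficient, and pin down the minimum period through $m(\bm{1};\,q)$. The only cosmetic difference is that the paper writes out the explicit formula from \eqref{st2.2} rather than phrasing the reduction coordinate-wise; you also spell out the period-comparison step slightly more carefully than the paper does.
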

\begin{proof}
By equation \eqref{st2.2}, we have
\begin{align*}
F(q) = \chi_{\X_q} = \sum_{i=1}^km(\chi_i;\, q) \cdot \chi_i = \dfrac{1}{\#\G}\sum_{i=1}^k\sum_{\g \in \G} \chi_i(\g) \cdot \left( \prod_{j=1}^{r(\g)} \gcd\{e_{\g,j},\, q\} \right) \cdot \chi_i \cdot  q^{\ell - r(\g)} \in R(\G)[q],
\end{align*}
hence $F$ is a quasi-polynomial with the $\gcd$-property.
Since $m(\bm{1};\, q)$ has the minimum period $\tilde{n}$, $F$ also has the minimum period $\tilde{n}$.

By Proposition \ref{p2.4}, the leading term of each multiplicity $m(\chi_i;\, q)$ is $\frac{\chi_i(1)}{\#\G}q^\ell$.
Thus, we have
\begin{align*}
\sum_{i=1}^k\dfrac{\chi_i(1)}{\#\G} \cdot \chi_i \cdot q^\ell = \dfrac{\chi_\st}{\#\G}q^\ell
\end{align*}
as the leading term of $F$.
\end{proof}

%\subsection{Quasi-polynomiality of the number of orbits}
\subsection{Number of orbits}

In this section, we prove the quasi-polynomiality of the number of $\G$-orbits.
First, we describe the permutation character $\chi_{\Gx}$ on the $\G$-orbit $\Gx$ of $x \in L_q$.
\begin{lemma}\label{G-orbit_lemma}
    Let $\Gx$ denote the $\G$-orbit of $x \in L_q$.
    Then we have
    \begin{align*}
        \chi_{\Gx}(\g)
        = \#\G(x)^\g
        = \left(\Ind^\G_{\G_x}\bm{1}\right)(\g).
    \end{align*}
\end{lemma}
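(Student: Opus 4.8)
The plan is to verify the two equalities in turn. The first, $\chi_{\Gx}(\g) = \#\G(x)^\g$, is the general principle that the character of a permutation representation counts fixed points. Since $\rho_{\Gx}(\g)$ permutes the natural basis $\set{y \mid y \in \Gx}$ of $\mathbb{C}\Gx$, its matrix in this basis is a permutation matrix, and a diagonal entry equals $1$ precisely when the corresponding basis vector is fixed by $\g$. Taking the trace therefore gives $\chi_{\Gx}(\g) = \#\set{y \in \Gx \mid \g y = y} = \#\G(x)^\g$; this is exactly the observation already used for $\X_q$ in the derivation of \eqref{m(i,q)}.

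For the second equality I would argue by direct counting. Unwinding the definition of the induced function with $\varphi = \bm{1}$ yields
\[
\left(\Ind^\G_{\G_x}\bm{1}\right)(\g) = \frac{1}{\#\G_x}\,\#\set{\eta \in \G \mid \eta^{-1}\g\eta \in \G_x},
\]
so it suffices to prove that $\#\set{\eta \in \G \mid \eta^{-1}\g\eta \in \G_x} = \#\G_x \cdot \#\G(x)^\g$. To this end I would use the orbit map $\G \longrightarrow \Gx$, $\eta \longmapsto \eta x$, whose fibres are the left cosets of $\G_x$ and hence each have cardinality $\#\G_x$. For a point $y = \eta x \in \Gx$, the fixed-point condition $\g y = y$ is equivalent to $\eta^{-1}\g\eta\, x = x$, that is, to $\eta^{-1}\g\eta \in \G_x$.

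The key point is that this condition depends only on the point $y$, equivalently only on the coset $\eta\G_x$: replacing $\eta$ by $\eta h$ with $h \in \G_x$ conjugates $\eta^{-1}\g\eta$ by $h$, and since $h \in \G_x$ this preserves membership in $\G_x$. Consequently the set $\set{\eta \in \G \mid \eta^{-1}\g\eta \in \G_x}$ is precisely the union of the fibres lying over the fixed points $y \in \G(x)^\g$, which gives $\#\G_x \cdot \#\G(x)^\g$ and establishes the claim. There is no genuine obstacle in the argument; the one point requiring care is this well-definedness on cosets, which is what ensures that entire fibres, rather than partial ones, contribute to the count. A slicker but less self-contained alternative would be to note that the orbit-stabilizer theorem gives a $\G$-set isomorphism $\Gx \cong \G/\G_x$ and then cite the standard identification of $\mathbb{C}[\G/\G_x]$ with $\Ind^\G_{\G_x}\bm{1}$; I would retain the elementary counting as the main proof.
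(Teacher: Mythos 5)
Your proposal is correct and follows essentially the same route as the paper: both count the elements $\eta x \in \Gx$ fixed by $\g$ via the equivalence $\g(\eta x) = \eta x \iff \eta^{-1}\g\eta \in \G_x$, divide by $\#\G_x$ to account for the fibres of the orbit map, and match the result with the defining formula for $\Ind^\G_{\G_x}\bm{1}$. Your extra care about well-definedness of the condition on left cosets is a point the paper leaves implicit, but it is the same argument.
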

\begin{proof}
    An element $\eta x$ of $\G(x)$ is fixed by $\g$ if and only if $\eta^{-1}\g\eta$ fixes $x$.
    Thus, the cardinality of $\Gx^\g$ is 
    \begin{align*}
        \#\Gx^\g
        = \dfrac{\#\set{\eta \in \G \mid \eta^{-1}\g\eta \in \G_x}}{\#\G_x}.
    \end{align*}
    On the other hand, it follows directly that the above expression is equal to $\left(\Ind^\G_{\G_x}\bm{1}\right)(\g)$:
    \begin{align*}
        \left(\Ind^\G_{\G_x}\bm{1}\right)(\g)
        = \dfrac{1}{\#\G_x}\sum_{\substack{\eta \in \G\\\eta^{-1}\g\eta \in \G_x}}\bm{1}(\eta^{-1}\g\eta)
        = \dfrac{\#\set{\eta \in \G \mid \eta^{-1}\g\eta \in \G_x}}{\#\G_x}.
    \end{align*}
\end{proof}

For a 1-dimensional character $\lambda$  of $\G$ and $q \in \mathbb{Z}_{>0}$, let $f_{L/\GAMMA}(\lambda;\, q)$ denote the number of $\G$-orbit of $\X_q$ whose isotropy subgroup is contained in the subgroup $\lambda^{-1}(1)$ of $\G$.
Using the Frobenius reciprocity (\ref{FrobRecip}), we obtain the following lemma.
\begin{lemma}\label{numoforbit}
    For a 1-dimensional character $\lambda$  of $\G$ and $q \in \mathbb{Z}_{>0}$, we have
    \begin{align*}
        f_{L/\G}(\lambda;\, q) = (\lambda, \chi_{\X_q}) = m(\lambda;\, q).
    \end{align*}
    
\end{lemma}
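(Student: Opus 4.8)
The plan is to expand the permutation character $\chi_{\X_q}$ as a sum of orbit characters and to evaluate the inner product $(\lambda, \chi_{\X_q})$ one orbit at a time. Since $\X_q$ is the disjoint union of its $\G$-orbits, choosing a complete set of orbit representatives $x$ gives the decomposition $\chi_{\X_q} = \sum_x \chi_{\Gx}$, so that by additivity of the inner product $(\lambda, \chi_{\X_q}) = \sum_x (\lambda, \chi_{\Gx})$. It therefore suffices to compute each term $(\lambda, \chi_{\Gx})$ separately.

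For a single orbit I would use Lemma \ref{G-orbit_lemma}, which identifies the orbit character with an induced character, $\chi_{\Gx} = \Ind^\G_{\G_x}\bm{1}$. Frobenius reciprocity (\ref{FrobRecip}) then transforms the induction on the right-hand argument into a restriction on the left:
\[
(\lambda, \chi_{\Gx}) = \left(\lambda, \Ind^\G_{\G_x}\bm{1}\right) = \left(\Res^\G_{\G_x}\lambda, \bm{1}\right),
\]
where the final inner product is taken over $\G_x$ and equals $\frac{1}{\#\G_x}\sum_{\g \in \G_x}\lambda(\g)$. Here the hypothesis that $\lambda$ is one-dimensional is what makes the computation clean: $\Res^\G_{\G_x}\lambda$ is again a one-dimensional, hence irreducible, character of $\G_x$, so orthogonality of irreducible characters forces this sum to be $1$ when $\Res^\G_{\G_x}\lambda$ is trivial and $0$ otherwise. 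The restriction is trivial precisely when $\G_x \subseteq \lambda^{-1}(1)$.

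Summing over the orbit representatives, each orbit with $\G_x \subseteq \lambda^{-1}(1)$ contributes $1$ and every other orbit contributes $0$, so $(\lambda, \chi_{\X_q})$ counts exactly the $\G$-orbits of $\X_q$ whose isotropy subgroup is contained in $\lambda^{-1}(1)$; this is the definition of $f_{L/\G}(\lambda;\,q)$. The second asserted equality $(\lambda, \chi_{\X_q}) = m(\lambda;\,q)$ is immediate from \eqref{m(i,q)}, which records that the multiplicity of an irreducible character is given by the corresponding inner product. The one place demanding genuine attention is the per-orbit evaluation of $\left(\Res^\G_{\G_x}\lambda, \bm{1}\right)$, and in particular the reduction to the two cases of trivial and nontrivial restriction; the rest of the argument is purely formal bookkeeping.
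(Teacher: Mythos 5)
Your proof is correct and follows essentially the same route as the paper's own argument: decompose $\chi_{\X_q}$ into orbit characters, apply Lemma \ref{G-orbit_lemma} and Frobenius reciprocity (\ref{FrobRecip}), and use the irreducibility of $\Res^\G_{\G_x}\lambda$ (as a one-dimensional character) to evaluate each term $\left(\Res^\G_{\G_x}\lambda,\, \bm{1}\right)$ as $1$ or $0$ according to whether $\G_x \subseteq \lambda^{-1}(1)$. There is no gap.
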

\begin{proof}
    Note that the second equality is the definition of $m(\lambda;\, q)$.
    
    Note that the permutation character $\chi_{\X_q}$ can be decomposed into a sum of all permutation characters of $\G$-orbit of $L_q$:
    \begin{align*}
        \chi_{\X_q} = \sum_{\text{$\Gx$ : $\G$-orbit}}\chi_{\Gx}.
    \end{align*}
    By Lemma \ref{G-orbit_lemma} and Frobenius reciprocity (\ref{FrobRecip}), we have
    \begin{align*}
        (\lambda,\chi_{\X_q})
        = \sum_{\text{$\Gx$ : $\G$-orbit}}(\lambda, \chi_{\Gx})
        = \sum_{\text{$\Gx$ : $\G$-orbit}}\left(\lambda,\, \Ind^\G_{\G_x}\bm{1}\right)
        = \sum_{\text{$\Gx$ : $\G$-orbit}}\left(\Res^\G_{\G_x}\lambda,\, \bm{1}\right).
    \end{align*}
    Since $\Res^\G_{\G_x}\lambda$ is an irreducible character of $\G_x$,
    the orthogonality of irreducible characters implies that
    \begin{align*}
        \left(\Res^\G_{\G_x}\lambda,\, \bm{1}\right)
        %&= \begin{cases}
        %    1 & \Res^\G_{\G_x}\lambda = \bm{1};\\
        %    0 & \text{otherwise}
        %\end{cases}\\
        &= \begin{cases}
            1 & \G_x \subseteq \lambda^{-1}(1);\\
            0 & \text{otherwise}.
        \end{cases}
    \end{align*}
    Therefore, we have $(\lambda,\chi_{\X_q}) = f_{L/\G}(\lambda;\, q)$.
\end{proof}

\begin{corollary}%[Restatement of Corollary \ref{G-orbit}]
\label{cor:G-orbit}
    The function $f_{L/\GAMMA}(\lambda;\,-) : \mathbb{Z}_{>0} \longrightarrow \mathbb{Z}$ is a quasi-polynomial of degree $\ell$ and it has the $\gcd$-property.
\end{corollary}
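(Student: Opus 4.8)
The plan is to express $f_{L/\G}(\lambda;\,-)$ as a single instance of the multiplicity quasi-polynomial already shown to exist, and then transport all the desired properties through that identification. By Lemma \ref{numoforbit}, for the particular $1$-dimensional character $\lambda$ we have the equality $f_{L/\G}(\lambda;\,q) = m(\lambda;\,q)$ as functions of $q$. Since a $1$-dimensional character is in particular an irreducible character, $\lambda$ is one of the $\chi_i$, so $m(\lambda;\,q)$ is exactly the quasi-polynomial studied in Corollary \ref{multi} and the propositions following it. Thus the entire content of the statement reduces to quoting the properties already established for $m(\chi_i;\,q)$.

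Concretely, I would argue as follows. First, invoke Corollary \ref{multi} to conclude that $m(\lambda;\,q)$ is a quasi-polynomial in $q$, given explicitly by \eqref{st2.2}. Second, apply Proposition \ref{p2.4}: the leading term of $m(\chi_i;\,q)$ is $\frac{\chi_i(1)}{\#\G}q^\ell$, and since $\lambda$ is $1$-dimensional we have $\lambda(1)=1$, so the leading term is $\frac{1}{\#\G}q^\ell$, which is nonzero; hence the degree is exactly $\ell$. Third, apply the Proposition asserting the $\gcd$-property of $m(\chi_i;\,q)$ with period $\tilde{n}$, which immediately yields the $\gcd$-property for $f_{L/\G}(\lambda;\,-)$. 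Chaining these three citations through the identity of Lemma \ref{numoforbit} completes the proof.

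There is essentially no obstacle here: all the hard analytic work—establishing quasi-polynomiality via the Smith normal form computation in Lemma \ref{sl2.1}, the degree computation via the injectivity of $\rho$ in Proposition \ref{p2.4}, and the $\gcd$-property—has already been carried out for general irreducible characters $\chi_i$. The only point requiring a word of care is the observation that a $1$-dimensional character is automatically irreducible, so that the results stated for the $\chi_i$ genuinely apply to $\lambda$; this is immediate since an irreducible decomposition of a $1$-dimensional representation can contain only a single $1$-dimensional summand. The corollary is therefore a direct specialization, and the proof is a short sequence of references rather than a new argument.

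\begin{proof}
By Lemma \ref{numoforbit}, we have $f_{L/\G}(\lambda;\,q) = m(\lambda;\,q)$ for all $q \in \mathbb{Z}_{>0}$. Since $\lambda$ is a $1$-dimensional character, it is irreducible, hence equal to one of the $\chi_i$. Corollary \ref{multi} then shows that $f_{L/\G}(\lambda;\,-) = m(\lambda;\,-)$ is a quasi-polynomial in $q$ with the $\gcd$-property (period $\tilde{n}$). Finally, by Proposition \ref{p2.4} the leading term of $m(\lambda;\,q)$ is $\frac{\lambda(1)}{\#\G}q^\ell = \frac{1}{\#\G}q^\ell$, which is nonzero, so the degree is exactly $\ell$.
\end{proof}
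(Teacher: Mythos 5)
Your proof is correct and follows the same route as the paper, which simply cites Lemma \ref{numoforbit} and Corollary \ref{multi}; you have merely made explicit the supporting facts (the $\gcd$-property proposition and Proposition \ref{p2.4} for the degree) that the paper's one-line proof leaves implicit.
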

\begin{proof}
    This follows from Corollary \ref{multi} and Lemma \ref{numoforbit}.
\end{proof}

%\subsection{Reciprocity theorem for the multiplicity}
\subsection{Reciprocity for the multiplicities}

Let $\rho : \G \longrightarrow \GL(L)$ be a representation and $R_\g$ the representation matrix of $\rho(\g)$.
Define the \textit{reciprocity character} $\delta_\rho : \G \longrightarrow \mathbb{C}$ by
\begin{align*}
    \delta_\rho(\g) \ceq (-1)^{r(\g)},
\end{align*}
where $r(\g) = \rank(R_\gamma - I_\ell)$.
The following lemma shows that $\delta_\rho(\g) = \det{R_\g}$ and that 
$\delta_\rho$ is an irreducible character of $\G$.

\begin{lemma}[{\cite[Lemma 5.5]{Stapledon}}]
\label{lem:det}
Let $R\in\GL_n(\mathbb{R})$ be a real matrix of finite order. 
Let $r \ceq \rank(R-I_n)$.
Then $\det{R} = (-1)^r$. 
\end{lemma}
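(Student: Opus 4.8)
The plan is to prove that a real matrix $R$ of finite order satisfies $\det R = (-1)^{r}$ where $r = \rank(R - I_n)$, by analyzing the real Jordan/rational canonical structure forced by finite order. Since $R$ has finite order, its minimal polynomial divides $t^m - 1$ for some $m$, so $R$ is diagonalizable over $\mathbb{C}$ with eigenvalues that are roots of unity. I would exploit the fact that $\det R$ is determined by these eigenvalues while $r = \rank(R - I_n) = n - \dim\ker(R - I_n)$ is the codimension of the $1$-eigenspace.

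The key observation I would carry out is a careful bookkeeping of eigenvalues. Because $R$ is a real matrix, its non-real eigenvalues come in conjugate pairs $\zeta, \bar\zeta$, each such pair contributing $\zeta\bar\zeta = |\zeta|^2 = 1$ to $\det R$ (as each is a root of unity of modulus $1$), hence contributing $+1$. The real eigenvalues of a finite-order real matrix can only be $+1$ or $-1$. Thus $\det R = (+1)^{a}\cdot(-1)^{b}\cdot(+1)^{(\text{pairs})} = (-1)^{b}$, where $a$ is the multiplicity of the eigenvalue $1$ and $b$ is the multiplicity of the eigenvalue $-1$. It remains to identify $(-1)^{b}$ with $(-1)^{r}$. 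Since $R$ is diagonalizable, $\dim\ker(R - I_n)$ equals the multiplicity $a$ of the eigenvalue $1$, so $r = n - a$. Therefore I must check that $n - a \equiv b \pmod 2$, equivalently that $n - a - b$ is even; but $n - a - b$ is exactly the number of non-real eigenvalues, which come in conjugate pairs and hence is even. This closes the congruence and yields $\det R = (-1)^b = (-1)^{n-a} = (-1)^r$.

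An alternative route, should I wish to avoid complexification, is to pass to the real rational canonical form: $R$ is conjugate over $\mathbb{R}$ to a block-diagonal matrix whose blocks are $1\times 1$ blocks $(\pm 1)$ and $2\times 2$ rotation-type blocks with determinant $1$, arising from conjugate pairs of non-real roots of unity. Each rotation block $B$ satisfies $\det(B - I_2) = 2 - 2\cos\theta > 0$ for $\theta \neq 0$, so such blocks are invertible and contribute $2$ to $r$ and $+1$ to $\det R$; the $(-1)$-blocks contribute $1$ to both $r$ and a factor $(-1)$ to $\det R$; and the $(+1)$-blocks contribute $0$ to $r$ and $+1$ to $\det R$. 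Tallying gives $\det R = (-1)^r$ directly, and conjugation by a real invertible matrix changes neither $\det R$ nor $\rank(R - I_n)$.

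The main obstacle, and the step deserving the most care, is justifying that $\dim\ker(R - I_n)$ equals the algebraic multiplicity of the eigenvalue $1$; this is precisely where diagonalizability is essential, and I would emphasize that finite order forces a separable minimal polynomial dividing $t^m - 1$, ruling out nontrivial Jordan blocks. I expect the eigenvalue-counting itself to be routine once this diagonalizability is secured, so the proof reduces to invoking that $R$ is semisimple and then performing the parity count on the conjugate pairs of non-real eigenvalues.
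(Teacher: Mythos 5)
Your argument is correct and is essentially the same as the paper's: both diagonalize $R$ over $\mathbb{C}$ using finite order, note that the eigenvalues are roots of unity with non-real ones in conjugate pairs of modulus $1$, and conclude by the parity count that $r$ (the number of eigenvalues different from $1$) is congruent mod $2$ to the multiplicity of $-1$, which determines $\det R$. The paper writes $r = p_2 + 2(q_1+\cdots+q_m)$ directly rather than via $r = n - a$, but this is only a cosmetic difference.
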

\begin{proof}
Since $R$ is finite order, it is diagonalizable (in $\mathbb{C}$), and 
we can write $R=PDP^{-1}$, where $P, D\in\GL_n(\mathbb{C})$ with $D$ diagonal. 
Clearly, $\rank(R-I_n)=\rank (D-I_n)$. Thus, $r$ is the number 
of diagonal entries of $D$ that are not equal to $1$. Since $R$ is 
a real matrix, the set of eigenvalues is closed under complex conjugation. 
The finiteness of the order implies that all the eigenvalues have 
absolute value $1$. Therefore, the diagonal entries of $D$ are 
as follows (with multiplicities): 
\begin{align*}
1^{p_1},\ (-1)^{p_2},\ 
\alpha_1^{q_1},\ \overline{\alpha}_1^{q_1},\ 
\alpha_2^{q_2},\ \overline{\alpha}_2^{q_2},\ \dots ,\ 
\alpha_m^{q_m},\ \overline{\alpha}_m^{q_m},
\end{align*}
with $p_i, q_j\in\mathbb{Z}$ and $|\alpha_j|=1$. Hence, we have 
\begin{align*}
r=p_2+2(q_1+q_2+\dots+q_m), 
\end{align*}
and $\det{D} = (-1)^{p_2}$. Thus, $\det{R} = (-1)^r$. 
\end{proof}

The quasi-polynomials 
$m(\chi_i \otimes \delta_\rho;\, q)$ and $m(\chi_i;\, q)$ are connected 
by the following formula. 
\begin{theorem}[Reciprocity theorem]
\label{thm:recip}
The following formula holds for an irreducible character $\chi_i$ of $\G$: 
\begin{equation}
\label{eq:recip}
m(\chi_i \otimes \delta_\rho;\, q) = (-1)^\ell m(\chi_i;\, -q). 
\end{equation}  
\end{theorem}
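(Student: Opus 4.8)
The plan is to derive \eqref{eq:recip} by a term-by-term comparison of the two sides, using only the explicit formula \eqref{st2.2} for the multiplicities together with the definition $\delta_\rho(\g)=(-1)^{r(\g)}$. Since $\delta_\rho$ is one-dimensional, the tensor product acts by pointwise multiplication of characters, so $(\chi_i\otimes\delta_\rho)(\g)=\chi_i(\g)\delta_\rho(\g)=(-1)^{r(\g)}\chi_i(\g)$. Plugging this into \eqref{st2.2}, the left-hand side $m(\chi_i\otimes\delta_\rho;\,q)$ becomes $\tfrac{1}{\#\G}\sum_{\g\in\G}(-1)^{r(\g)}\chi_i(\g)\bigl(\prod_{j=1}^{r(\g)}\gcd\{e_{\g,j},q\}\bigr)q^{\ell-r(\g)}$. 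The goal is then to recognize this same sum in $(-1)^\ell m(\chi_i;\,-q)$.

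First I would make precise what ``$m(\chi_i;\,-q)$'' means, since the quasi-polynomial is a priori defined only on $\mathbb{Z}_{>0}$. A quasi-polynomial is extended to negative arguments through its constituents: if $q\equiv r\pmod{\tilde n}$, then $m(\chi_i;\,-q)\ceq g_{-r}(-q)$, where $g_s$ denotes the constituent indexed by $s$. Here the $\gcd$-property is essential: since $g_s$ depends on $s$ only through $\gcd\{\tilde n,s\}$ and $\gcd\{\tilde n,-r\}=\gcd\{\tilde n,r\}$, we get $g_{-r}=g_r$, so $m(\chi_i;\,-q)=g_r(-q)$ for $q\equiv r$. Because every elementary divisor $e_{\g,j}$ divides $\tilde n$, the constituent is $g_r(t)=\tfrac{1}{\#\G}\sum_{\g}\chi_i(\g)\bigl(\prod_j\gcd\{e_{\g,j},r\}\bigr)t^{\ell-r(\g)}$, in which each $\gcd\{e_{\g,j},r\}=\gcd\{e_{\g,j},q\}$.

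The computation then reduces to substituting $-q$ for $t$ in $g_r$: the $\gcd$-factors are unaffected, while each monomial $t^{\ell-r(\g)}$ contributes a sign $(-1)^{\ell-r(\g)}$. Thus $m(\chi_i;\,-q)=\tfrac{1}{\#\G}\sum_\g\chi_i(\g)\bigl(\prod_j\gcd\{e_{\g,j},q\}\bigr)(-1)^{\ell-r(\g)}q^{\ell-r(\g)}$. Multiplying by $(-1)^\ell$ turns the sign on the $\g$-summand into $(-1)^{2\ell-r(\g)}=(-1)^{r(\g)}$, which is exactly the sign appearing in the expression for $m(\chi_i\otimes\delta_\rho;\,q)$ obtained in the first paragraph. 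Summing over $\g\in\G$ and dividing by $\#\G$ then yields \eqref{eq:recip}.

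I expect the only genuine subtlety to be the middle step: legitimizing the evaluation of a quasi-polynomial at a negative argument and confirming that the $\gcd$-factors behave evenly under $q\mapsto -q$. Once the identity $g_{-r}=g_r$ is established via the $\gcd$-property, so that only the monomial factor $q^{\ell-r(\g)}$ responds to the substitution, the remainder is pure sign bookkeeping and presents no further obstacle.
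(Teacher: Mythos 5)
Your proposal is correct and follows essentially the same route as the paper: substitute $(\chi_i\otimes\delta_\rho)(\g)=(-1)^{r(\g)}\chi_i(\g)$ into the explicit formula \eqref{st2.2} and absorb the sign via $(-1)^{r(\g)}q^{\ell-r(\g)}=(-1)^{\ell}(-q)^{\ell-r(\g)}$. You are in fact more careful than the paper on one point, namely using the $\gcd$-property to justify what evaluation of the quasi-polynomial at $-q$ means; the paper defers this to a one-line remark after the theorem.
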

\begin{proof}
Note that since $\chi_i$ is an irreducible character, 
$\chi_i \otimes \delta_\rho$  is also irreducible. 
Using (\ref{st2.2}), we compute $m(\chi_i\otimes\delta_\rho;\, q)$ 
as follows (omitting $\prod_{j=1}^{r(\g)}$): 
\begin{align*}
(\chi_i \otimes \delta_\rho)(\g) \cdot q^{\ell-r(\g)} = \chi_i(\g)(-1)^{r(\g)} q^{\ell-r(\g)} = \chi_i(\g)(-1)^\ell (-q)^{\ell-r(\g)}
\end{align*}
for each $\g \in \G$. This implies (\ref{eq:recip}). 
\end{proof}
Note that the map $F(q)=\chi_{X_q}$ can be extended to 
$F:\mathbb{Z}\longrightarrow R(\G)$ as a quasi-polynomial. 
\begin{corollary}
\label{cor:rec}
The quasi-polynomial $F:\mathbb{Z}\longrightarrow R(\G)$ satisfies 
$F(q) = (-1)^\ell\delta_\rho F(-q)$.
\end{corollary}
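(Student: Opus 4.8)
The plan is to expand both sides in the basis of irreducible characters and to reduce the identity to the multiplicity-level reciprocity already established in Theorem~\ref{thm:recip}. First I would write, using the decomposition \eqref{eq:lincomb} applied to the extension of $F$ to $\mathbb{Z}$,
\[
F(-q) = \sum_{i=1}^k m(\chi_i;\, -q)\,\chi_i,
\]
and hence, using that multiplication by $\delta_\rho$ in $R(\G)$ amounts to tensoring each $\chi_i$ with $\delta_\rho$,
\[
(-1)^\ell \delta_\rho\, F(-q) = (-1)^\ell \sum_{i=1}^k m(\chi_i;\, -q)\,(\chi_i \otimes \delta_\rho).
\]

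Next I would apply Theorem~\ref{thm:recip} in the rewritten form $(-1)^\ell m(\chi_i;\, -q) = m(\chi_i \otimes \delta_\rho;\, q)$, which turns the right-hand side into
\[
\sum_{i=1}^k m(\chi_i \otimes \delta_\rho;\, q)\,(\chi_i \otimes \delta_\rho).
\]
The key observation is that $\delta_\rho$ is a one-dimensional character satisfying $\delta_\rho \otimes \delta_\rho = \bm{1}$, so the assignment $\chi_i \mapsto \chi_i \otimes \delta_\rho$ is an involution, and in particular a bijection, of the set $\set{\chi_1,\ldots,\chi_k}$ of irreducible characters (each $\chi_i \otimes \delta_\rho$ is irreducible, as already noted in the proof of Theorem~\ref{thm:recip}). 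Reindexing the sum along this bijection yields $\sum_{j=1}^k m(\chi_j;\, q)\,\chi_j = F(q)$, which is the desired identity.

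The computation is short, and the only step requiring care is the reindexing: as $i$ ranges over all irreducible characters, the pairs $\bigl(m(\chi_i \otimes \delta_\rho;\, q),\, \chi_i \otimes \delta_\rho\bigr)$ must range over exactly the pairs $\bigl(m(\chi_j;\, q),\, \chi_j\bigr)$. This follows immediately from the bijectivity of tensoring by the order-two character $\delta_\rho$, so I anticipate no genuine obstacle; the entire content of the statement is carried by Theorem~\ref{thm:recip}, and this corollary merely repackages it at the level of $R(\G)$-valued quasi-polynomials.
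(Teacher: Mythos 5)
Your proof is correct and is essentially the same as the paper's: both expand $F$ in irreducible characters, invoke Theorem~\ref{thm:recip}, and reindex the sum via the involution $\chi_i \mapsto \chi_i \otimes \delta_\rho$ (you merely run the chain of equalities in the opposite direction and make the bijectivity of that reindexing explicit, which the paper leaves implicit).
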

\begin{proof}
    By Theorem $\ref{thm:recip}$, it follows that 
    \begin{align*}
        F(q) = \chi_{\X_q} &= \sum_{i=1}^k m(\chi_i;\, q) \cdot \chi_i\\
        &= \sum_{i=1}^k (-1)^\ell m(\chi_i \otimes \delta_\rho;\,-q) \cdot \chi_i\\
        &= (-1)^\ell \sum_{i=1}^k m(\chi_i;\, -q) (\chi_i\otimes\delta_\rho)\\
        &= (-1)^\ell \delta_\rho F(-q).
        \end{align*}
\end{proof}

%\subsection{Some examples}
\subsection{Examples} 
We present some simple examples involving cyclic groups and symmetric groups.

\begin{example}\label{example_c6on2}
    Let $\G \ceq \mathbb{Z}/6\mathbb{Z}$ be a cyclic group of order $6$ generated by $\sigma$. 
    Let $\chi: \G \longrightarrow \mathbb{C}$ be the function that 
    sends $\sigma$ to $\zeta_6 \ceq e^{\frac{2\pi\sqrt{-1}}{6}}$. Then the irreducible characters of $\G$ are $\{\chi,\ldots,\chi^5,\chi^6 = \bm{1}\}$, where $\bm{1}$ is the character of the trivial representation of $\G$.
    Consider the action of $\G$ on $L \ceq \mathbb{Z}^2$ given by
    \begin{align*}
        \sigma \longmapsto R_\sigma \ceq \begin{pmatrix}
            0 & 1\\ -1 & 1
        \end{pmatrix}.
    \end{align*}
    To compute $\chi_{\X_q}$, we need to compute the rank and the elementary divisors of $R_{\sigma^i} - I_\ell$ for each $i \in \{1,\ldots,5\}$. They are as follows:
    \begin{align*}
        r(\sigma^i) = 2\ \text{ for all $i \in \{1,\ldots,5\}$},\qquad 
        (e_{\sigma^1,1},e_{\sigma^1,2}) = (e_{\sigma^5,1},e_{\sigma^5,2}) = (1,1),\\
        (e_{\sigma^2,1},e_{\sigma^2,2}) = (e_{\sigma^4,1},e_{\sigma^4,2}) = (1,3),\qquad 
        (e_{\sigma^3,1},e_{\sigma^3,2}) = (2,2).
    \end{align*}
    Hence, we obtain the multiplicity $m(\chi^j;q)$ as follows: 
    \begin{align*}
        m(\chi^1;\,q) = m(\chi^5;\,q) &= \begin{cases}
            \, \dfrac{1}{6}(q^2-1) & \gcd\{6,q\} = 1;\vspace{2mm}\\
            \, \dfrac{1}{6}(q^2-4) & \gcd\{6,q\} = 2;\vspace{2mm}\\
            \, \dfrac{1}{6}(q^2-3) & \gcd\{6,q\} = 3;\vspace{2mm}\\
            \, \dfrac{1}{6}(q^2-6) & \gcd\{6,q\} = 6,
        \end{cases}\\[6pt]
        m(\chi^2;\,q) = m(\chi^4;\,q) &= \begin{cases}
            \, \dfrac{1}{6}(q^2-1) & \gcd\{6,q\} = 1;\vspace{2mm}\\
            \, \dfrac{1}{6}(q^2+2) & \gcd\{6,q\} = 2;\vspace{2mm}\\
            \, \dfrac{1}{6}(q^2-3) & \gcd\{6,q\} = 3;\vspace{2mm}\\
            \, \dfrac{1}{6}q^2 & \gcd\{6,q\} = 6,
        \end{cases}\\[6pt]
        m(\chi^3;\,q) &= \begin{cases}
            \, \dfrac{1}{6}(q^2-1) & \gcd\{6,q\} = 1;\vspace{2mm}\\
            \, \dfrac{1}{6}(q^2-4) & \gcd\{6,q\} = 2;\vspace{2mm}\\
            \, \dfrac{1}{6}(q^2+3) & \gcd\{6,q\} = 3;\vspace{2mm}\\
            \, \dfrac{1}{6}q^2 & \gcd\{6,q\} = 6,
        \end{cases}\\[6pt]
        m(\bm{1};\,q) &= \begin{cases}
            \, \dfrac{1}{6}(q^2+5) & \gcd\{6,q\} = 1;\vspace{2mm}\\
            \, \dfrac{1}{6}(q^2+8) & \gcd\{6,q\} = 2;\vspace{2mm}\\
            \, \dfrac{1}{6}(q^2+9) & \gcd\{6,q\} = 3;\vspace{2mm}\\
            \, \dfrac{1}{6}(q^2+12) & \gcd\{6,q\} = 6.
        \end{cases}
    \end{align*}
    In this case, since $\delta_\rho = \bm{1}$, it follows that $m(\chi^j;\, q) = m(\chi^j;\, -q)$ for $j \in \{1,\ldots,6\}$.
    
    We also obtain $\chi_{\X_q}$ as
    \begin{align*}
        \chi_{\X_q} = \begin{cases}
            \, \dfrac{1}{6}\Bigl(\chi_\st q^2 + 6(\bm{1}) - \chi_\st\Bigr) & \gcd\{6,q\} = 1;\vspace{2mm}\\
            \, \dfrac{1}{6}\Bigl(\chi_\st q^2 + 12(\bm{1}) + 6(\chi^2 + \chi^4) - 4\chi_\st\Bigr) & \gcd\{6,q\} = 2;\vspace{2mm}\\
            \, \dfrac{1}{6}\Bigl(\chi_\st q^2 + 12(\bm{1}) + 6\chi^3 - 3\chi_\st\Bigr) & \gcd\{6,q\} = 3;\vspace{2mm}\\
            \, \dfrac{1}{6}\Bigl(\chi_\st q^2 + 18(\bm{1}) + 6(\chi^2 + \chi^3 + \chi^4) - 6\chi_\st \Bigr)& \gcd\{6,q\} = 6,
        \end{cases}
    \end{align*}
    where $\chi_\st = \chi + \cdots + \chi^6$ is the regular character of $\G$.
\end{example}

\begin{example}\label{example_c6on3}
    As in the previous example, we consider the cyclic group $\G = \mathbb{Z}/6\mathbb{Z}$.
    The action of $\G$ on $L \ceq \mathbb{Z}^3$ is given by 
    \begin{align*}
        \sigma \longmapsto R_\sigma \ceq \begin{pmatrix}
            -1 & -1 & 0\\ 1& 0 & 0\\ 0 & 0 & -1
        \end{pmatrix}.
    \end{align*}
    By computing in the same way, we obtain the following:
    \begin{align*}
        r(\sigma^1) = r(\sigma^5) = 3,\qquad r(\sigma^2) = r(\sigma^4) = 2,\qquad  r(\sigma^3) = 1,\\
        (e_{\sigma^1,1},e_{\sigma^1,2},e_{\sigma^1,3}) = (e_{\sigma^5,1},e_{\sigma^5,2},e_{\sigma^5,3}) = (1,1,6),\\
        (e_{\sigma^2,1},e_{\sigma^2,2}) = (e_{\sigma^4,1},e_{\sigma^4,2}) = (1,3),\qquad 
        e_{\sigma^3,1} = 2,
    \end{align*}
    and
    \begin{align*}
        m(\chi^1;\,q) = m(\chi^5;\,q) &= \begin{cases}
            \, \dfrac{1}{6}(q^3 - q^2 - q + 1) & \gcd\{6,q\} = 1;\vspace{2mm}\\
            \, \dfrac{1}{6}(q^3 - 2q^2 - q + 2) & \gcd\{6,q\} = 2;\vspace{2mm}\\
            \, \dfrac{1}{6}(q^3 - q^2 - 3q + 3) & \gcd\{6,q\} = 3;\vspace{2mm}\\
            \, \dfrac{1}{6}(q^3 - 2q^2 - 3q + 6) & \gcd\{6,q\} = 6,
        \end{cases}\\[6pt]
        m(\chi^2;\,q) = m(\chi^4;\,q) &= \begin{cases}
            \, \dfrac{1}{6}(q^3 + q^2 - q - 1) & \gcd\{6,q\} = 1;\vspace{2mm}\\
            \, \dfrac{1}{6}(q^3 + 2q^2 - q - 2) & \gcd\{6,q\} = 2;\vspace{2mm}\\
            \, \dfrac{1}{6}(q^3 + q^2 - 3q - 3) & \gcd\{6,q\} = 3;\vspace{2mm}\\
            \, \dfrac{1}{6}(q^3 + 2q^2 - 3q - 6) & \gcd\{6,q\} = 6,
        \end{cases}\\[6pt]
        m(\chi^3;\,q) &= \begin{cases}
            \, \dfrac{1}{6}(q^3 - q^2 + 2q - 2) & \gcd\{6,q\} = 1;\vspace{2mm}\\
            \, \dfrac{1}{6}(q^3 - 2q^2 + 2q - 4) & \gcd\{6,q\} = 2;\vspace{2mm}\\
            \, \dfrac{1}{6}(q^3 - q^2 + 6q - 6) & \gcd\{6,q\} = 3;\vspace{2mm}\\
            \, \dfrac{1}{6}(q^3 - 2q^2 + 6q - 12) & \gcd\{6,q\} = 6,
        \end{cases}\\[6pt]
        m(\bm{1};\,q) &= \begin{cases}
            \, \dfrac{1}{6}(q^3 + q^2 + 2q + 2) & \gcd\{6,q\} = 1;\vspace{2mm}\\
            \, \dfrac{1}{6}(q^3 + 2q^2 + 2q + 4) & \gcd\{6,q\} = 2;\vspace{2mm}\\
            \, \dfrac{1}{6}(q^3 + q^2 + 6q + 6) & \gcd\{6,q\} = 3;\vspace{2mm}\\
            \, \dfrac{1}{6}(q^3 + 2q^2 + 6q + 12) & \gcd\{6,q\} = 6.
        \end{cases}
    \end{align*}
    In this case, $\delta_\rho = \chi^3$.
    Then, we have $m(\chi^1;\, q) = -m(\chi^4;\, -q)$ and $m(\chi^3;\, q) = -m(\bm{1};\, -q)$.
    
    We also obtain $\chi_{\X_q}$ as follows: 
    \begin{align*}
        \chi_{\X_q} = \begin{cases}
            \, \dfrac{1}{6}\Bigl(
                \chi_\st q^3
                + \bigl(\bm{1} - \chi_{15} + \chi_{24} - \chi^3 \bigr)q^2
                + \bigl(2(\bm{1}) - \chi_{15} - \chi_{24} +2 \chi^3 \bigr) q\\
                \qquad \qquad \qquad \qquad \qquad \qquad \qquad \quad
                - \bigl(2(\bm{1}) + \chi_{15}- \chi_{24} -2 \chi^3 \bigr)
            \Bigr) & \gcd\{6,q\} = 1;\vspace{2mm}\\
            \, \dfrac{1}{6}\Bigl(
                \chi_\st q^3
                + 2\bigl(\bm{1} - \chi_{15} + \chi_{24} - \chi^3 \bigr)q^2
                + \bigl(2(\bm{1}) - \chi_{15} - \chi_{24} +2 \chi^3 \bigr) q\\
                \qquad \qquad \qquad \qquad \qquad \qquad \qquad \quad
                - 2\bigl(2(\bm{1}) + \chi_{15}- \chi_{24} -2 \chi^3 \bigr)
            \Bigr) & \gcd\{6,q\} = 2;\vspace{2mm}\\
            \, \dfrac{1}{6}\Bigl(
                \chi_\st q^3
                + \bigl(\bm{1} - \chi_{15} + \chi_{24} - \chi^3 \bigr)q^2
                + 3\bigl(2(\bm{1}) - \chi_{15} - \chi_{24} +2 \chi^3 \bigr) q\\
                \qquad \qquad \qquad \qquad \qquad \qquad \qquad \quad
                - 3\bigl(2(\bm{1}) + \chi_{15}- \chi_{24} -2 \chi^3 \bigr)
            \Bigr) & \gcd\{6,q\} = 3;\vspace{2mm}\\
            \, \dfrac{1}{6}\Bigl(
                \chi_\st q^3
                + 2\bigl(\bm{1} - \chi_{15} + \chi_{24} - \chi^3 \bigr)q^2
                + 3\bigl(2(\bm{1}) - \chi_{15} - \chi_{24} +2 \chi^3 \bigr) q\\
                \qquad \qquad \qquad \qquad \qquad \qquad \qquad \quad
                - 6\bigl(2(\bm{1}) + \chi_{15}- \chi_{24} -2 \chi^3 \bigr)
            \Bigr) & \gcd\{6,q\} = 6,
        \end{cases}
    \end{align*}
    where $\chi_{15} \ceq \chi^1 + \chi^5$ and $\chi_{24} \ceq \chi^2 + \chi^4$. 
\end{example}

\begin{example}\label{example_s3on2}
    Let $\G \ceq \mathfrak{S}_3$ be the symmetric group of degree $3$, which is also the Weyl group of type $A_2$.
    The group $\G$ has three irreducible characters: the trivial character $\bm{1}$, the determinant character $\delta$ and the $2$-dimensional character $\chi$.
    Consider the lattice $L \ceq \mathbb{Z}(e_1 - e_2) \oplus \mathbb{Z}(e_2 - e_3)$.
    The group $\G$ acts on $L$ as a permutation of $\set{e_1,e_2,e_3}$.
    
    Note that we only need to calculate the rank and the elementary divisors for the representative of each conjugacy class.
    Choose the representatives $\tau \ceq (1\ 2)$ and $\sigma \ceq (1\ 2\ 3)$.
    The representation matrices are given by
    \begin{align*}
        R_\tau = \begin{pmatrix}
            -1 & 1\\ 0 & 1
        \end{pmatrix},\qquad R_\sigma = \begin{pmatrix}
            0 & -1\\ 1 & -1
        \end{pmatrix}
    \end{align*}
    Thus, we have 
    \begin{align*}
        r(\tau) = 1,\quad r(\sigma) = 2,\qquad e_{\tau,1} = 1,\quad (e_{\sigma,1},e_{\sigma,2}) = (1,3).
    \end{align*}
    Therefore, we obtain 
    \begin{align*}
        m(\bm{1};\,q) &= \begin{cases}
            \dfrac{1}{6}(q^2 + 3q + 2) & \gcd\{3,q\} = 1;\vspace{2mm}\\
            \dfrac{1}{6}(q^2 + 3q + 6) & \gcd\{3,q\} = 3,
        \end{cases}\\[6pt]
        m(\delta;\,q) &= \begin{cases}
            \dfrac{1}{6}(q^2 - 3q + 2) & \gcd\{3,q\} = 1;\vspace{2mm}\\
            \dfrac{1}{6}(q^2 - 3q + 6) & \gcd\{3,q\} = 3,
        \end{cases}\\[6pt]
        m(\chi;\,q) &= \begin{cases}
            \dfrac{1}{6}(2q^2 - 2) & \gcd\{3,q\} = 1;\vspace{2mm}\\
            \dfrac{1}{6}(2q^2 - 6) & \gcd\{3,q\} = 3.
        \end{cases}
    \end{align*}
    In this case, $\delta_\rho = \delta$.
    Hence, we have $m(\bm{1};\, q) = m(\delta;\, -q)$ and $m(\chi;\, q) = m(\chi;\, -q)$.

    We also obtain $\chi_{\X_q}$ as
    \begin{align*}
        \chi_{\X_q} = \begin{cases}
            \dfrac{1}{6}\Bigl(
                \chi_\st q^2 + 3(\bm{1} - \delta)q + 2(\bm{1}-\delta + \chi)
            \Bigr) & \gcd\{3,q\} = 1;\vspace{2mm}\\
            \dfrac{1}{6}\Bigl(
                \chi_\st q^2 + 3(\bm{1} - \delta)q + 6(\bm{1}-\delta + \chi)
            \Bigr) & \gcd\{3,q\} = 3,\vspace{2mm}\\
        \end{cases}  
    \end{align*}
    where $\chi_\st = \bm{1} + \delta + 2\chi$.
    As Haiman mentions in {\cite[\S 7.4]{hai}}, the multiplicity $m(\bm{1};\,q)$ is equal to the Ehrhart quasi-polynomial $\LL_{\overline{A_0}}(q) = \#(q\overline{A_0} \cap L)$ of the fundamental alcove $\overline{A_0}$ of type $A_2$.
\end{example}

\end{document}